\newcommand{\col}{\hbox{col}}
\newtheorem{assmp}{\bf Assumption}
\newtheorem{rem}{\bf Remark}
\newtheorem{thm}{\bf Theorem}
\newcommand{\bx}{{\bf x}}
\newcommand{\by}{{\bf y}}
\newcommand{\EQ}{\begin{eqnarray}}
\newcommand{\EN}{\end{eqnarray}}
\newcommand{\EQQ}{\begin{eqnarray*}}
\newcommand{\ENN}{\end{eqnarray*}}
\begin{document}
\title{Distributed Nash Equilibrium Seeking for a Class of Uncertain Nonlinear Systems subject to Bounded Disturbances}

\author{Jie~Huang,~\IEEEmembership{Life Fellow,~IEEE}
	\thanks{This article is a slightly updated version of
		\cite{Huang24}, and the work described in this article
		was supported in part by the Research Grants Council of the Hong Kong Special Administrative Region under grant No. 14201621 and in part by National Natural Science Foundation of China under Project 61973260.}
\thanks{The author is with the Department of Mechanical and Automation
Engineering, The Chinese University of Hong Kong, Hong Kong (e-mail:
jhuang@mae.cuhk.edu.hk.)}}

\maketitle

\begin{abstract}
In this paper, we study the problem of the distributed Nash equilibrium seeking of $N$-player games  over jointly strongly connected switching networks. The action of each player is governed by  a class of uncertain nonlinear systems.
Our approach integrates the consensus algorithm, the distributed estimator over jointly strongly connected switching networks, and some  adaptive control technique. Furthermore, we  also consider the disturbance rejection problem for bounded disturbances with unknown bounds.
A special case of our results gives the solution of the distributed Nash equilibrium seeking for  high-order integrator systems.
\end{abstract}

\begin{IEEEkeywords}
Nash equilibrium seeking, jointly strongly connected switching graphs, nonlinear systems, adaptive control.
\end{IEEEkeywords}

\IEEEpeerreviewmaketitle

\section{Introduction}

\IEEEPARstart{T}he problem of {the} Nash equilibrium seeking for games with multiple players has been well studied for the perfect information case, in \cite{Shamma2004, Basar1999, Facchinei2003, Flam2002, Li1987, G2014}, to name just a few. In practice, not every player can observe the actions of other players and the players have to communicate with each other over a communication network
describing the information exchanges of different players. Such a scenario is called the imperfect information case.
The problem of the Nash equilibrium seeking for the imperfect information case is also called the distributed Nash equilibrium seeking problem.
According to the control systems governing the actions of the players, a game can be classified as single integrator game and  high-order integrator game.
The distributed Nash equilibrium seeking problem was studied first for the single integrator game over static and connected communication networks in \cite{Ye2017, Gadjov2019, Gharesifard2013, Ye2018, Ye201804, Ye2021, Suad2020, Romano2019}, and it was further studied recently in  \cite{He2021}  over jointly strongly connected switching networks. The distributed Nash equilibrium seeking problem  was also studied for high-order integrator  games over
static and connected communication networks in \cite{Romano2019, Romano2020} and  over jointly strongly connected switching networks in  \cite{He2022b, He2024}.
A control system is often subject to some external disturbances caused by, for example, the  insensitivity of the sensors, uncertainty of system dynamics.  The problem of the distributed Nash equilibrium seeking  with disturbance rejection  has also been studied by quite a few papers.
For example, references  \cite{He2022b, Romano2019, Romano2020}  studied the Nash equilibrium seeking for high-order integrator dynamics subject to disturbances generated by a known linear autonomous system called exosystem.

So far, the study on the distributed Nash equilibrium seeking is limited to single-order or high-order integrator systems whose dynamics are exactly known. In the real world,
uncertainty and nonlinearity are ubiquitous.
In this paper, we will further study the distributed Nash equilibrium seeking problem for games whose actions are governed by  a class of uncertain nonlinear systems over jointly strongly connected switching networks. This class of systems contains the high-order integrator systems
as a special case. As the approaches in the  existing references such as  \cite{Ye2017, Gadjov2019, Gharesifard2013, Ye2018, Ye201804, Ye2021, Romano2019, Romano2020} cannot handle
uncertain  nonlinear dynamics, we manage to develop an approach integrating the consensus algorithm, the distributed estimator over jointly strongly connected switching networks, and some adaptive control technique to tackle the problem.
It turns out that such an integrated approach is effective and indeed solve the problem under consideration. It is noted that
the class of nonlinear systems is common in the adaptive control literature  \cite{slotine}.
But,  as pointed out in Remark \ref{rem6}, the distributed Nash equilibrium seeking problem is more complex than the  mere adaptive stabilization  problem of the same system because
the Nash equilibrium is unknown and the communications of the agents are subject to switched networks, which can be disconnected at every time instant.

It is also noted that  \cite{Zhang2019, Zhang2020} studied the disturbance rejection problem with disturbances generated by an uncertain linear exosystem for aggregative games over static, connected and undirected networks, but the approach in  \cite{Zhang2019, Zhang2020} cannot be carried over to solve our problem.


The rest of the paper is organized as follows.
Section II  summarizes basic knowledge for game theory based on \cite{Gadjov2019, G2014},  and
some existing results from  \cite{He2022b, Romano2019}.
Section III presents the main result. 

\indent\textbf{Notation} The notation $\vert \vert x \vert \vert$ denotes 2-norm of a vector $x$ while $\vert \vert P\vert \vert$ is the induced Euclidean norm for a matrix $P$.
$\mathbb{R}^n$ is the $n$-dimensional Euclidean space.
$\mathbb{R}^{m\times n}$ is the set of all $m\times n$ real matrices. For a positive definite symmetric matrix $P \in \mathbb{R}^{n\times n}$, $\lambda_{\min} (P)$ denotes the minimal eigenvalue of $P$. For any time function $x (t)$ and nonnegative integer $r$, $x^{(r)} $ denotes the $r$th derivative of $ x(t)$ with
$x^{(0)} = x$.
For column vectors $a_i, i=1,\cdots,n$,  $\mbox{col} (a_1,\cdots,a_n )= [a_1^T,\cdots,a_n^T  ]^T$. For any matrices $A_1, \cdots, A_n$,
$\mbox{diag}(A_1,...,A_n) =
	\left[\begin{array}{ccc}
A_1&&\\
&\ddots&\\
&&A_n
\end{array}\right]$.
$A \otimes B$ denotes the Kronecker product of any two matrices $A$ and $B$.
$\bm{0}_N$ and $\mathbf{1}_N$ are the $N$-dimensional column vectors with all elements $0$ and $1$, respectively,  and $I_{N}$ represents an $N \times N$ identity matrix.
 We use $\sigma(t)$ to denote a piecewise constant switching signal $\sigma:[0,+\infty)\rightarrow \mathcal{P}=\{1,2,\dots,n_{0}\}$, where $n_{0}$ is a positive integer, and
$\mathcal{P}$ is called a switching index set. We assume that all switching instants $t_0=0 <t_1<t_2,\dots$ satisfy $t_{i+1}-t_{i}\geq \tau_{0}>0$ for some constant $\tau_{0}$ and all $i =  0, 1, 2, \cdots$, where $\tau_{0}$ is called the dwell time. A function $d: [t_0, \infty) \rightarrow  \mathbb{R}^n$ is said to be piecewise continuous if there exists a sequence $\{\tau_j,~ j = 0, 1, \ldots \}$ with a dwell time $\tau >0$ such that $d (t)$ is continuous on each time interval $[\tau_j, \tau_{j+1})$, $j = 0, 1, \ldots$.  Let
$||d||_{\infty} = \sup_{t \geq 0} d (t)$, which is called  the infinity norm of $d$.  $d$ is said to be bounded over $ [0, \infty)$ if $||d||_{\infty} $ is finite.

\section{Preliminaries}
\indent In this section, we introduce the basics of the non-cooperative game theory based on \cite{Gadjov2019, G2014}.

\subsection{Non-cooperative Games}\label{noncooperative}

A non-cooperative game denoted by $\Gamma$ is defined by a triplet as follows:
\EQ \label{game}
\Gamma \triangleq \{\mathcal{V}, f_i, U_i\},
\EN
where $\mathcal{V}$ is the set of $N$ players. For each player $i \in  \mathcal{V}$, the strategy of player $i$ is denoted by  $x_i \in U_i \subset \mathbb{R}^{n_i}$.  Let $\sum_{i=1}^N n_i=n$ and $U =U_1\times U_2 \times \cdots \times U_N \subset \mathbb{R}^n$, which is called the strategy space. Then, $f_i: U \to \mathbb{R}$ is the cost function for player $i$.  Let $\bm{x}=\col(x_1,x_2,\cdots,x_N) \in \mathbb{R}^n$, which is called the strategy vector,
$x_{-i}\triangleq (x_1,x_2, \cdots, x_{i-1},x_{i+1}, \cdots, x_N)$, and $U_{-i} =U_1\times \cdots \times U_{i-1} \times U_{i+1} \cdots \times U_N$.
Then, the goal of each player $i$ is, for all $x_{-i} \in U_{-i}$,  to minimize its cost function $f_i (x_i, x_{-i})$ over $x_i \in U_i$, that is,
\EQ
\mbox{ minimize }  f_i (x_i, x_{-i})~~\mbox{ subject to }~~ x_i \in U_i.
\EN
A strategy vector $\bm{x}^*=(x_i^*,x_{-i}^*) \in U$ is said to be a Nash equilibrium point (NE) if it is such that
\begin{align}
f_i(x_i^*,x_{-i}^*) \leq f_i(x_i,x_{-i}^*),~~  x_i \in U_i,~~ \forall i \in \mathcal{V}.
\end{align}

 To introduce  two standard assumptions,
let $F(\bm{x})= {\col} \left( \nabla_1 f_1 (x_1,x_{-1}), \nabla_2 f_2 (x_2,x_{-2}), \cdots, \nabla_N f_N (x_N,x_{-N})  \right)$
where  $\nabla_i f_i (x_i,x_{-i}) = \frac{\partial f_i(x_i,x_{-i})}{\partial x_i} \in  \mathbb{R}^{n_i}$,
which is called the pseudogradient of $f$.

\begin{assmp} \label{ass3.1}
For all $i \in \mathcal{V}$,  i) $U_i$ is nonempty, closed and convex; ii) the cost function $f_i (x_i, x_{-i})$  is convex and continuously differentiable in  ${x}_i$ for every fixed $x_{-i} \in U_{-i}$; iii) The pseudogradient $F$ is strongly monotone on $U$,
i.e., for some  $\mu >0$,
$$(\bx-\bx')^T (F(\bx)-F(\bx')) \geq \mu ||\bx-\bx'||^2,~~ \forall \bx,~ \bx' \in U.$$
\end{assmp}

\begin{assmp}  \label{ass3.2}
For all $i \in \mathcal{V}$,   $\nabla_i f_i (\bm{x}) $ is   Lipschitz continuous, i.e.,
$||\frac{\partial f_i(\bm{x})}{\partial x_i}  - \frac{\partial f_i(\bm{x}')}{\partial x_i} || \leq \psi_i ||\bm{x}-\bm{x}'||, ~\forall \bm{x}, \bm{x}' \in U$, for some constant $\psi_i >0$.
\end{assmp}

\begin{rem} \label{rem3} The following system
\EQ \label{3.1}
\dot{x}_i=-\nabla_if_i(x_i,x_{-i}), ~~ \forall~ i \in \mathcal{V}
\EN
 is called pseudogradient dynamics \cite{Gadjov2019}, which can be put into the following compact form:
\EQ \label{3.2}
\dot{\bm{x}}=-F(\bm{x}).
\EN
By Theorem 3 of \cite{G2014}, under Parts (i) and (ii) of Assumption 1,  a pure Nash equilibrium $\bx^* \in U$ exists, which  satisfies the following variational inequality:
\EQQ
(\bm{x}-\bm{x}^*)^T F(\bm{x}^*) \geq 0, ~ \bm{x} \in U.
\ENN
Under Assumption 1 with $U =  \mathbb{R}^n$,
the game $\Gamma$ has a unique Nash equilibrium  which  satisfies
\EQQ
F(\bx^*)={\bf 0}_n.
\ENN
Moreover, by Lemma 2 of  \cite{Gadjov2019}, under Assumption 1,   the NE $\bx^*$ of \eqref{3.2} is {globally exponentially stable}. Like in \cite{Ye2017, He2022b, Romano2019, Romano2020},
in what follows, we focus on the global case. Thus, it is assumed that  $U =  \mathbb{R}^n$.
\end{rem}

\subsection{Games with Uncertain Nonlinear  Dynamics}
Since system \eqref{3.1} can be viewed as the closed-loop system of the single-integrator system $\dot{x}_i = u_i$ under the state feedback control
$u_i = -\nabla_if_i(x_i,x_{-i})$, we call the game defined in \eqref{game} as the single-integrator game.
In what follows, we consider
the games whose players' actions $x_i$ are governed by  the following uncertain nonlinear  control systems:
\begin{equation} \label{eq14x}
{x}^{(r_i)}_i +   g_{i} (\xi_i, t) \theta_{i} = u_i + d_i,  ~ \forall~ i \in \mathcal{V}
\end{equation}
where $r_i \geq 1$, $x_i, u_i \in {\mathbb{R}}^{n_i}$, $\xi_i  = \col (x_i, \dot{x}_i, \cdots, {x}_i^{(r_i-1 )} )$,  $g_{i}:\mathbb{R}^{n_i r_i }\times [0,+\infty)\rightarrow\mathbb{R}^{n_i \times m_i}$ are known functions satisfying locally Lipschitz condition with respect to $\xi_{i}$ uniformly in $t$ and continuous in $t$,  $\theta_{i}\in\mathbb{R}^{m_i}$ are unknown constant parameter vectors,
and $d_{i}: [0,+\infty) \rightarrow\mathbb{R}^{n_i} $ are piecewise continuous bounded time functions with the bounds unknown, i.e., $\|d_i (t)\| \leq D_i$ for some unknown positive numbers $D_i$ and all $ t\geq 0$.

\begin{rem}
The disturbance rejection problem have been studied in several papers when  $d_i$ are generated by the following systems: $\forall i \in \mathcal{V}$,
\EQ \label{dis}
 \dot{v}_i=S_i v_i, ~~   d_i=D_i v_i
\EN
with $v_i \in \mathbb{R}^{q_i}$, and $d_i \in \mathbb{R}^{n_i}$.

For example, the Nash equilibrium seeking for the high-order integrator system, which is the special case of system \eqref{eq14x} with $\theta_i = 0$, was studied in \cite{Romano2019, Romano2020} over static and connected  networks and in \cite{He2022b}  over jointly strongly connected switching networks. More recently,
the Nash equilibrium seeking for the high-order  integrator system subject to bounded disturbances with unknown bounds was considered in \cite{acc23}.
\end{rem}


\section{Main Result}

In this section, we study the Nash equilibrium seeking for games with the nonlinear dynamics (\ref{eq14x}) over jointly strongly connected switching graphs.

The state space realization of \eqref{eq14x} is as follows: $\forall i\in \mathcal{V}$,
\begin{equation} \label{eq14}
\dot{\xi}_i = A_i \xi_i + B_i (u_i+d_i -g_{i} (\xi_i, t) \theta_{i}),
\end{equation}
where $\xi_i = \col (x_i, \dot{x}_i, \cdots, {x}_i^{(r_i-1 )} )$,
\EQ \label{eq213}
A_i &=&
\begin{bmatrix}
0 & 1& \cdots & 0 \\
\vdots & \vdots & \ddots & \vdots \\
0 & 0 & \cdots & 1 \\
0 & 0 & \cdots & 0
\end{bmatrix} \otimes I_{n_i},~ B_i=
\begin{bmatrix}
0 \\
\vdots \\
0 \\
1
\end{bmatrix} \otimes I_{n_i}, \nonumber \\
C_i &=&
\begin{bmatrix}
1 & 0 & \cdots & 0
\end{bmatrix}\otimes I_{n_i}.
\EN

Like in \cite{Romano2019, Romano2020, He2022b},  define a fictitious output
\begin{equation} \label{eq36}
\gamma_i = \sum_{k=0}^{r_i-2} {c_k^i} x_i^{(k)} + x^{(r_i - 1)}_i
\end{equation}
where $c_k^i$ are such that the  polynomials $a_i(s) = s^{(r_i-1)} + c^i_{(r_i-2)} s^{(r_i-2)} +\cdots + c_1^i s + c_0^i$ are Hurwitz with $c_0^i =1$.

Let  $x_i^s= \col (\dot{x}_i, \cdots, {x}_i^{(r_i -1)}) \in \mathbb{R}^{n_i (r_i-1)}$.  Then  performing the coordinate transformation $\xi_i \mapsto (\gamma_i, x_i^s)$ gives:    $\forall i \in \mathcal{V}$,
\begin{subequations} \label{eq34y}
\begin{align}
    \dot{\gamma}_i &= u_i + d_i - g_{i}(\xi_i, t)\theta_{i}   + K^s_i x_i^s \label{eq34ya}\\
   \dot{x}_i^s&=A_i^s x_i^s + B_i^s  (u_i +d_i -    g_{i}(\xi_i, t)\theta_{i} ) \label{eq34yc}
\end{align}
\end{subequations}
where
$K_i^s = \begin{bmatrix}
c_0^i & c_1^i &\cdots & c_{r_i-2}^i
\end{bmatrix}\otimes I_{n_i}$,
\EQQ \label{Ais}
A_i^s &=& \begin{bmatrix}
0 & 1 & \cdots & 0 \\
\vdots & \vdots & \ddots & \vdots \\
0 & 0 & \cdots & 1 \\
0 & 0 &\cdots & 0
\end{bmatrix} \otimes I_{n_i},~
B_i^s=\begin{bmatrix}
0 \\
\vdots \\
0 \\
1
\end{bmatrix} \otimes I_{n_i}.
\ENN

\subsection{Perfect Information}
For the sake of the better readability of the paper, let us first consider the perfect information case where the control $u_i$ can access the state $\xi_j$ of
(\ref{eq14x}) for all $j \in {\cal V}$, $\theta_i$ are all known, and $d_i = 0$. In this case, we allow our control law to be a full information control law as follows:
\begin{equation} \label{eq20u}
u_i=  h_i (\xi_1, \cdots, \xi_N),~~  i \in \mathcal{V}
\end{equation}
where $h_i$ are globally defined functions to be designed.

Let $\gamma_{-i} =  \col (\gamma_{1}, \cdots, \gamma_{i-1},\gamma_{i+1}, \cdots, \gamma_{N})$. Consider the following control law: $\forall i \in \mathcal{V}$,

\begin{equation} \label{pconlaw}
u_i=-\nabla_if_i(\gamma_i, \gamma_{-i}) +   g_{i} (\xi_i, t)\theta_{i} -  K^s_i x_i^s.
\end{equation}
Then, under the control law \eqref{pconlaw},   the closed-loop system with $d_i = 0$ is as follows:    $\forall i, j \in \mathcal{V}$,
\begin{subequations} \label{eq34aaa}
\begin{align}
    \dot{\gamma}_i &=-\nabla_if_i(\gamma_i, \gamma_{-i}) \label{eq34a}\\
    \dot{x}_i^s&=A_i^K x_i^s -B_i^s  \nabla_if_i(\gamma_i, \gamma_{-i})  \label{eq34c}
\end{align}
\end{subequations}
where
\EQQ \label{Ais}
A_i^K &=& \begin{bmatrix}
0 & 1 & \cdots & 0 \\
\vdots & \vdots & \ddots & \vdots \\
0 & 0 & \cdots & 1 \\
-c_0^i & -c_1^i &\cdots & -c_{r_i-2}^i
\end{bmatrix}\otimes I_{n_i}
\ENN
are all Hurwitz.
Let  $\bm{\gamma} = \mbox{col}( \gamma_1, \cdots, \gamma_N) $ and $\bm{x}^s=\mbox{col}(x_1^s,\cdots,x_N^s)$. Then the compact form of system (\ref{eq34aaa}) is as follows:
\begin{subequations} \label{eq38xyd}
\begin{align}
    \dot{\bm{\gamma}}&=-  F (\bm{\gamma})  \label{eq38xyda}  \\
    \dot{\bm{x}}^s&= A^K \bm{x}^s-B F (\bm{\gamma}) \label{eq38xydb}
\end{align}
\end{subequations}
where $A^K=\mbox{diag}(A_1^K,\cdots,A_N^K)$ and $B=\mbox{diag}(B_1^s,\cdots,B_N^s)$.

\begin{thm} \label{eq34}
Under Assumption \ref{ass3.1}, the unique equilibrium $(\bm{x}^*,\bm{0}_{\sum_{i=1}^{N} n_i (r_i-1)})$ of the closed-loop system \eqref{eq38xyd}  is globally exponentially stable.
\end{thm}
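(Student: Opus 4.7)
The plan is to exploit the cascade structure of \eqref{eq38xyd}: the $\bm{\gamma}$-subsystem \eqref{eq38xyda} is the autonomous pseudogradient dynamics, and the $\bm{x}^s$-subsystem \eqref{eq38xydb} is a Hurwitz linear system driven by $-B F(\bm{\gamma})$. The equilibrium is consistent because, under Assumption \ref{ass3.1} with $U = \mathbb{R}^n$, Remark \ref{rem3} yields $F(\bm{x}^*) = \bm{0}_n$.

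First I would establish global exponential stability of the $\bm{\gamma}$-subsystem. Using the Lyapunov candidate $V_1 = \tfrac{1}{2}\|\bm{\gamma} - \bm{x}^*\|^2$, the strong monotonicity of $F$ in Assumption \ref{ass3.1}(iii) and $F(\bm{x}^*) = \bm{0}_n$ give
\begin{equation*}
\dot{V}_1 = -(\bm{\gamma} - \bm{x}^*)^T (F(\bm{\gamma}) - F(\bm{x}^*)) \leq -\mu \|\bm{\gamma} - \bm{x}^*\|^2 = -2\mu V_1,
\end{equation*}
so $\|\bm{\gamma}(t)-\bm{x}^*\| \leq \|\bm{\gamma}(0)-\bm{x}^*\| e^{-\mu t}$.

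Next I would analyse the $\bm{x}^s$-subsystem. Since each $A_i^K$ is a companion matrix associated with the Hurwitz polynomial $a_i(s)$, the block-diagonal matrix $A^K$ is Hurwitz, so there exists $P = P^T > 0$ with $(A^K)^T P + P A^K = -I$. Taking $V_2 = (\bm{x}^s)^T P \bm{x}^s$ and using Assumption \ref{ass3.2} together with $F(\bm{x}^*) = \bm{0}_n$, $F$ is globally Lipschitz on $\mathbb{R}^n$ with some constant $\psi$, so
\begin{equation*}
\dot{V}_2 \leq -\|\bm{x}^s\|^2 + 2\|PB\|\,\psi\, \|\bm{x}^s\|\, \|\bm{\gamma}-\bm{x}^*\|.
\end{equation*}
A Young's-inequality step yields $\dot{V}_2 \leq -\tfrac{1}{2}\|\bm{x}^s\|^2 + c\,\|\bm{\gamma}-\bm{x}^*\|^2$ for some constant $c>0$.

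Finally, combining the two estimates through $V = V_1 + \epsilon V_2$ with $\epsilon$ chosen small enough that $\mu - \epsilon c > 0$ gives $\dot{V} \leq -\alpha V$ for some $\alpha > 0$, which, together with $V$ being quadratic and positive definite in $(\bm{\gamma}-\bm{x}^*, \bm{x}^s)$, yields the global exponential stability of $(\bm{x}^*, \bm{0})$. I do not expect any serious obstacle here, since the cascade is standard: the only point requiring a little care is deducing the global Lipschitzness of $F$ on $\mathbb{R}^n$ from Assumption \ref{ass3.2} so that the coupling term $B F(\bm{\gamma})$ can be dominated by a multiple of $\|\bm{\gamma}-\bm{x}^*\|$.
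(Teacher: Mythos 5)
Your handling of the $\bm{\gamma}$-subsystem coincides with the paper's: the same Lyapunov function $\tfrac{1}{2}\|\bm{\gamma}-\bm{x}^*\|^2$, strong monotonicity, and $F(\bm{x}^*)=\bm{0}_n$ give exponential convergence of $\bm{\gamma}$ to $\bm{x}^*$. Where you genuinely diverge is the $\bm{x}^s$-part, and the divergence has a cost. You treat \eqref{eq38xydb} as a Hurwitz linear system driven by $-BF(\bm{\gamma})$ and close the cascade with a composite Lyapunov function $V_1+\epsilon V_2$; to dominate the coupling term you must bound $\|F(\bm{\gamma})\|\le \psi\|\bm{\gamma}-\bm{x}^*\|$, which is exactly the global Lipschitz property of Assumption \ref{ass3.2}. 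The theorem, however, is stated under Assumption \ref{ass3.1} alone, and strong monotonicity supplies only a \emph{lower} bound on $(\bm{x}-\bm{x}')^T(F(\bm{x})-F(\bm{x}'))$, not an upper bound on $\|F(\bm{x})-F(\bm{x}')\|$; continuous differentiability of each $f_i$ in $x_i$ does not give global (or even linear local) growth of $F$ either. So, as written, your proof establishes the result only under the extra hypothesis of Assumption \ref{ass3.2}. The paper sidesteps this entirely: it never integrates \eqref{eq38xydb}, but instead reads the definition \eqref{eq36} of the fictitious output backwards, viewing $x^{(r_i-1)}_i+c^i_{r_i-2}x^{(r_i-2)}_i+\cdots+c^i_0 x_i=\gamma_i$ as an exponentially stable linear filter (the $c^i_k$ make $a_i(s)$ Hurwitz with $c^i_0=1$) driven by the exponentially convergent input $\gamma_i(t)\to x^*_i$; this forces $x_i\to x^*_i$ and all the derivatives collected in $x^s_i$ to converge to zero exponentially, with no growth condition on $F$ whatsoever. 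Your cascade argument is otherwise sound and standard, and it buys an explicit composite Lyapunov function for the full state; but to prove the theorem exactly as stated you should replace the $V_2$ step by the filter argument through \eqref{eq36}, or explicitly add Assumption \ref{ass3.2} to your hypotheses.
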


\begin{proof}
The conclusion of this theorem is obvious if one notes that $F (\bm{\gamma})$ tends to the origin exponentially as $\bm{\gamma}$ tends to $\bm{x}^*$ exponentially.
Nevertheless, to see the role of Assumption \ref{ass3.1}, we provide a detailed proof below.
By Remark \ref{rem3}, under Assumption \ref{ass3.1},  \eqref{eq38xyda} has a unique Nash equilibrium $\bx^*$.
Define a Lyapunov function candidate for \eqref{eq38xyda}
as follows:
\EQ  \label{V}
V (\bm{\gamma} - \bm{x}^*)= \frac{1}{2} (\bm{\gamma}- \bx^*)^T (\bm{\gamma}- \bx^*).
\EN
Then,   the derivative of ${V}$ along the solution of \eqref{eq38xyda}  satisfies
\EQ \label{314xy}
 \dot{V}  = - (\bm{\gamma} - \bm{x}^*)^T F (\bm{\gamma}).
 \EN
Since $F(\bm{x}^*) = 0$,  by Assumption \ref{ass3.1}, we have
 \EQ \label{315b}
 \begin{split}
 (\bm{\gamma} - \bm{x}^*)^T F (\bm{\gamma})&=(\bm{\gamma} - \bm{x}^*)^T (F(\bm{\gamma})- F(\bm{x}^*)) \\
 & \geq \mu || \bm{\gamma} - \bm{x}^*||^2.
 \end{split}
 \EN
Substituting \eqref{315b}  into \eqref{314xy} gives
\EQ \label{eq323}
\dot{V} \leq - \mu || \gamma - \bm{x}^*||^2.
\EN
Thus, $\lim_{t \rightarrow \infty}  \gamma (t) = \bm{x}^*$ exponentially.
Now, consider equation (\ref{eq36}), which can be viewed as a stable linear differential equation in $x_i$ with the input $\bm{\gamma}_i (t)$ satisfying  $\lim_{t \rightarrow \infty}  \bm{\gamma}_i (t) =  \bm{x}^{*}_i$ exponentially. Thus,
$\lim_{t \rightarrow \infty} \bm{x}^s (t) =  0$ exponentially.
\end{proof}

\subsection{Imperfect Information Case without Disturbances}

We now further consider the imperfect information case without disturbances.  Like in \cite{He2022b},  corresponding to the game described in (\ref{game}),
we can define a switching graph\footnote{See  \cite{cshbook} or \cite{God2001} for a summary of graph.} $\mathcal{G}_{\sigma (t)} =(\mathcal{V},\mathcal{E}_{\sigma (t)})$ with $\mathcal {V}=\{1,\dots,N\}$ and $\mathcal{E}_{\sigma (t)} \subset\mathcal{V}\times\mathcal{V}$ for all $t \geq 0$. For any $t \geq 0$, $\mathcal{E}_{\sigma (t)}$ contains an edge $(j, i)$ if and only if the player $i$ is able to observe the state $\xi_{j}$ of player $j$ at time $t$.

We assume all the players can communicate with each other over  a communication graph
satisfying the following assumption:
\begin{assmp} \label{ass3.7}
There exists a subsequence $\{i_k\}$ of $\{i:i=0,1,2,\dots\}$ with
$t_{i_{k+1}}-t_{i_k}< \nu$ for some positive number $\nu$ such that the union graph
${\mathcal{G}}_{\sigma ([t_{i_k},t_{i_{k+1}}))}$ is strongly connected.
\end{assmp}

\begin{rem}
As in \cite{He2021} and \cite{He2022b}, we say a switching graph ${\mathcal{G}}_{\sigma (t)}$ satisfying Assumption \ref{ass3.7} is jointly strongly  connected.
Under Assumption \ref{ass3.7}, the graph can be disconnected at every time instant. Thus,
the approaches in \cite{Romano2019, Romano2020, Ye2017, Ye2018, Ye201804, Ye2021} do not apply to this case.
\end{rem}

Let us first summarize the main result of \cite{He2021} which studied the Nash equilibrium seeking of single-integrator systems.
The control law in \cite{He2021}  is as follows: $\forall i, j \in \mathcal{V}$,
\begin{subequations} \label{conlaw}
\begin{align}
u_i&=-\delta {k}_i \nabla_if_i(\bm{y}_i),  \label{conlawa}\\
 \dot{y}_{ij}&=-\left(\sum_{k=1}^N a_{ik}(t)(y_{ij}-y_{kj})+a_{ij}(t)(y_{ij}- x_j) \right),  \label{conlawab}
\end{align}
\end{subequations}
where ${k}_i$ are fixed positive numbers, $\delta >0$ is some positive number,  $a_{ij} (t)$ are the elements of the adjacency matrix of the switching graph ${\mathcal{G}}_{\sigma (t)}$, $y_{ij} \in \mathbb{R}^{n_j}$ is interpreted as the estimate of  $x_j$ by player $i$, and $\by_i= \mbox{col~} (y_{i1},y_{i2},\cdots,y_{iN}) \in \mathbb{R}^{n} $ is the estimate of the strategy vector $\bx$ by player $i$.

Under the control law \eqref{conlaw},   the closed-loop system is as follows:    $\forall i, j \in \mathcal{V}$,
\begin{subequations} \label{eq34}
\begin{align}
    \dot{x}_i &=-\delta {k}_i \nabla_if_i(\bm{y}_i) \label{eq34a}\\
     \dot{y}_{ij}&=-\left(\sum_{k=1}^N a_{ik}(t)(y_{ij}-y_{kj})+a_{ij}(t)(y_{ij}-x_j) \right).     \label{eq34b}
\end{align}
\end{subequations}

Let ${\mathcal L}_{\sigma(t)}$ denote the Laplacian of the switching graph ${\mathcal{G}}_{\sigma (t)}$,
 $\by=\mbox{col~} (\by_1,\cdots,\by_N) \in \mathbb{R}^{N n}$, and
 \EQQ
&B_{\sigma(t)} = \mbox{diag}  (  a_{11} (t) I_{n_1},  \cdots, a_{1N}(t)I_{n_N}, a_{21}(t)I_{n_1},\\
& \cdots,
 a_{2N}(t)I_{n_N}, \cdots, a_{N1}(t)I_{n_1}, \cdots, a_{NN}(t)I_{n_N}  ).
\ENN
Then system (\ref{eq34}) takes the following compact form:
\begin{subequations} \label{eq38xx}
\begin{align}
    \dot{\bm{x}}&=-\delta \bm{k} H(\bm{y})  \label{eq38xxa}  \\
    \dot{\bm{y}}&=-({\mathcal L}_{\sigma(t)} \otimes I_{N}+B_{\sigma(t)}) \bm{y} + B_{\sigma(t)} (\bm{1}_N \otimes \bm{x}) \label{eq38xxb}
\end{align}
\end{subequations}
where $\bm{k} = \mbox{diag} (k_1 I_{n_1}, \cdots, k_N I_{n_N})$ and
 $H (\bm{y}) = \col (\nabla_1 f_1 (\bm{y}_1), \nabla_2 f_2 (\bm{y}_2) \cdots,  \nabla_N f_N (\bm{y}_N))$.

Then Lemma 4.1 of \cite{He2021} can be rephrased as follows:

\begin{thm} \label{th3.2}
 Under Assumptions \ref{ass3.1} to \ref{ass3.7},  there exists $\delta^*>0$ such that,  for $0<\delta<\delta^*$,
 the
 equilibrium  $(\bm{x}^*, {\bf 1}_N \otimes \bm{x}^*)$ of system (\ref{eq38xx})  is globally exponentially stable.
\end{thm}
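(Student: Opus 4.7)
The plan is a two-time-scale (singular perturbation) analysis exploiting the smallness of $\delta$. The estimator dynamics on $\bm{y}$ act as the ``fast'' subsystem, while the strategy update on $\bm{x}$, scaled by $\delta$, is the ``slow'' subsystem.

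First I would change coordinates by setting $\tilde{\bm{x}} = \bm{x} - \bm{x}^*$ and the estimator error $\tilde{\bm{y}} = \bm{y} - \bm{1}_N \otimes \bm{x}$. Since $\mathcal{L}_{\sigma(t)} \bm{1}_N = 0$, the closed-loop system rewrites as
\begin{align*}
\dot{\tilde{\bm{x}}} &= -\delta \bm{k}\, H(\bm{1}_N \otimes \bm{x} + \tilde{\bm{y}}), \\
\dot{\tilde{\bm{y}}} &= -({\mathcal L}_{\sigma(t)} \otimes I_N + B_{\sigma(t)})\, \tilde{\bm{y}} + \delta\,(\bm{1}_N \otimes \bm{k}\, H(\bm{y})),
\end{align*}
so the forcing of the fast dynamics by the slow ones is of order $\delta$.

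Next I would show that the autonomous fast subsystem $\dot{\tilde{\bm{y}}} = -({\mathcal L}_{\sigma(t)} \otimes I_N + B_{\sigma(t)}) \tilde{\bm{y}}$ is globally exponentially stable under Assumption \ref{ass3.7}. This is a standard distributed-estimator fact for jointly strongly connected switching networks, established either by exhibiting a time-varying quadratic Lyapunov function $W(t,\tilde{\bm{y}})$ that decays across each joint-connectivity window $[t_{i_k}, t_{i_{k+1}})$, or by a discrete-time contraction argument on the transition matrix over one such window. The goal is to secure $W$ satisfying $\alpha_1 \|\tilde{\bm{y}}\|^2 \leq W \leq \alpha_2 \|\tilde{\bm{y}}\|^2$ and $\dot{W} \leq -\alpha_3 \|\tilde{\bm{y}}\|^2$ along the autonomous fast flow.

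Then I would assemble a composite Lyapunov function $V = \tfrac{1}{2}\,\tilde{\bm{x}}^T \bm{k}^{-1} \tilde{\bm{x}} + \eta\, W(t, \tilde{\bm{y}})$ (weighting by $\bm{k}^{-1}$ removes the gain mismatch so that strong monotonicity applies cleanly). For the slow part, $F(\bm{x}^*)=0$ together with strong monotonicity (Assumption \ref{ass3.1}) give $-\delta\,\tilde{\bm{x}}^T F(\bm{x}) \leq -\delta\mu\,\|\tilde{\bm{x}}\|^2$, while Lipschitz continuity (Assumption \ref{ass3.2}) yields $|\tilde{\bm{x}}^T(H(\bm{y}) - F(\bm{x}))| \leq c_1\|\tilde{\bm{x}}\|\,\|\tilde{\bm{y}}\|$. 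For the fast part, the autonomous decay $-\eta\alpha_3\|\tilde{\bm{y}}\|^2$ is corrected by an order-$\delta$ cross term of the form $\eta\delta\bigl(c_2\|\tilde{\bm{y}}\|^2 + c_3\|\tilde{\bm{x}}\|\,\|\tilde{\bm{y}}\|\bigr)$ bounded via the same Lipschitz estimate. Fixing $\eta$ first and then choosing $\delta^*$ sufficiently small renders the resulting $2\times 2$ quadratic form in $(\|\tilde{\bm{x}}\|,\|\tilde{\bm{y}}\|)$ negative definite, so that $\dot V \leq -\alpha V$ for some $\alpha>0$, which delivers global exponential stability of $(\bm{x}^*, \bm{1}_N \otimes \bm{x}^*)$.

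The hard part is constructing the time-varying Lyapunov function $W$ for the switched fast subsystem: because the graph can be disconnected at every instant under Assumption \ref{ass3.7}, no fixed quadratic $W$ can give pointwise decay, and the decay must instead be extracted across an entire connectivity window of length at most $\nu$. Once such $W$ is in hand (for instance via the construction used in the He--Huang reference cited alongside this statement), the composite Lyapunov computation and the singular-perturbation bookkeeping are routine.
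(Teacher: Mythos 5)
Your proposal is correct and follows essentially the same route as the paper: the paper also passes to the error coordinate $\tilde{\bm{y}}=\bm{y}-\bm{1}_N\otimes\bm{x}$, invokes Lemma 4.1 of the cited He--Huang reference to obtain a bounded time-varying matrix $P(t)$ (your $W$) satisfying a differential Lyapunov equation for the switched estimator subsystem, forms the composite function $V_1=\tfrac{1}{2}(\bm{x}-\bm{x}^*)^T\bm{k}^{-1}(\bm{x}-\bm{x}^*)+\tilde{\bm{y}}^TP(t)\tilde{\bm{y}}$, and chooses $\delta^*$ so that the resulting $2\times 2$ quadratic form in $(\|\bm{x}-\bm{x}^*\|,\|\tilde{\bm{y}}\|)$ is negative definite. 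The only cosmetic difference is your extra weighting parameter $\eta$, which the paper absorbs into the explicit formula for $\delta^*$.
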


\begin{rem}  \label{rem6}
Let
$\tilde{\bm{y}}=\bm{y}-\bm{1}_{N} \otimes \bm{x}$.
Then,
\EQ  \label{10}
\begin{split}
    \dot{\tilde{\bm{y}}}&=\dot{\bm{y}}-\bm{1}_N \otimes \dot{\bm{x}} \\
    &=-({\mathcal L}_{\sigma(t)}\otimes I_{n}+B_{\sigma(t)})\tilde{\bm{y}}+\bm{1}_N \otimes  (\delta \bm{k} H(\bm{y})).
\end{split}
\EN

Thus, system \eqref{eq38xx} is converted to the following form:
\begin{subequations} \label{eq38y}
\begin{align}
    \dot{\bm{x}}&=-\delta \bm{k} H(\bm{y})  \label{eq38ya}  \\
   \dot{\tilde{\bm{y}}}&=-({\mathcal L}_{\sigma(t)}\otimes I_{n}+B_{\sigma(t)})\tilde{\bm{y}}+\bm{1}_N \otimes  (\delta \bm{k} H(\bm{y})). \label{eq38yb}
    \end{align}
\end{subequations}

By Lemma 4.1 of \cite{He2021}, under Assumption \ref{ass3.7}, the origin of the following system:
\EQ \label{linear}
\dot{\tilde{\bm{y}}}=-({\mathcal L}_{\sigma(t)} \otimes I_{n}+B_{\sigma(t)}) \tilde{\bm{y}}
\EN
is exponentially stable, and,  for any  constant positive definite matrix $Q \in \mathbb{R}^{N n \times N n}$, there exists a symmetric matrix $P(t) \in \mathbb{R}^{N n \times N n}$ which is bounded and continuous for all $t \geq 0$ such that   $||P(t)|| \leq p$ for some positive constant $p$ for all $t \geq 0$,
\EQ \label{312}
c_1 I_{Nn} \leq  P(t) \leq c_2 I_{Nn}
\EN
for some positive constants $c_1$ and $c_2$, and, for $t \in [t_j, t_{j+1}), j=0, 1,2,\cdots $
\EQ \label{313}
\begin{split}
\dot{P}(t)
&= P (t) ({\mathcal L}_{\sigma(t)} \otimes I_{n} +B_{\sigma(t)}) \\
&+({\mathcal L}_{\sigma(t)} \otimes I_{n}+B_{\sigma(t)})^T P(t) -Q.
\end{split}
\EN

Moreover,  let  $\bm{w}= \col ((\bm{x}- \bm{x}^*),\tilde{\bm{y}})$ and
\EQ \label{lfc}
\begin{aligned}
    V_1 (\bm{w}, t) &=\frac{1}{2} (\bm{x}- \bm{x}^*)^T \bm{k}^{-1} (\bm{x}- \bm{x}^*)  +\tilde{\bm{y}}^T P (t) \tilde{\bm{y}}.
    \end{aligned}
\EN
Then, along the  trajectory of (\ref{eq38y}), $\dot{V}_1$ satisfies
\EQ	
\begin{split}
\dot{V}_1 &\leq  -\delta  \left[ \begin{array}{cc}
  ||\bm{x}-\bm{x^*}|| & ||\bar{\bm{y}}||
 \end{array}
 \right] Y \left[
 \begin{array}{c}
  ||\bm{x}-\bm{x^*}|| \\
 ||\bar{\bm{y}}||
 \end{array}
 \right]
    \end{split}
\EN
where
\EQ \nonumber
Y=
\left[
 \begin{array}{cc}
  \mu  & -\frac{\psi}{2}-k \psi p  \\
 -\frac{\psi}{2}-k \psi p  &  \frac{\lambda_{min}(Q)}{\delta}-2k \psi p
 \end{array}
 \right]
\EN
with $k = ||\bm{k}||$ and $\psi = \sqrt{\psi_1^2+ \ldots+ \psi_N^2}$.
Let $ \delta^* = \frac{4 \mu \lambda_{min}(Q)}{ \psi^2 + 4 k^2 \psi^2 p^2 + 4 \mu k p \psi^2  + 8 \mu k \psi p }$. Then, for all $0 < \delta < \delta^*$, the matrix $Y$ is positive definite. Hence,
$\dot{V} \leq -\delta \lambda_{min}(Y)||\bm{w}||^2$. Thus,  $(\bm{x}^*, \bm{0}_{Nn})$  is a globally exponentially stable equilibrium of  system (\ref{eq38y}),  which implies
  $(\bm{x}^*, {\bf 1}_N \otimes \bm{x}^*)$  is a globally exponentially stable equilibrium of  system (\ref{eq38xx}).
\end{rem}
%

Now, we  propose our control law as follows: $\forall i,j\in \mathcal{V}$,
\begin{subequations} \label{equ}
\begin{align}
{u}_i &=- \delta {k}_i \nabla_if_i(\bm{z_i}) + g_i (\xi_i, t) \hat{\theta}_i  - K^s_i x_i^s + \kappa_i (\hat{\gamma}_i-\gamma_i)  \label{equ1} \\
\dot{\hat{\gamma}}_i &= - \delta {k}_i \nabla_if_i(\bm{z_i}) \label{equ2} \\
\dot{z}_{ij} &=-\left(\sum_{k=1}^N a_{ik}(t)(z_{ij}-z_{kj})+a_{ij}(t)(z_{ij}-\hat{\gamma}_j) \right) \label{equ4} \\
\dot{\hat{\theta}}_i&= \Lambda_i g_i^T (\xi_i, t) (\hat{\gamma}_i-\gamma_i) \label{equ5}
\end{align}
\end{subequations}
where $\kappa_i$ are  positive numbers, $z_{ij} \in \mathbb{R}^{n_j}$,  $\bm{z_i}=\col (z_{i1},z_{i2},\cdots,z_{iN}) \in \mathbb{R}^n$, $\Lambda_i \in \mathbb{R}^{m_i \times m_i} $ are symmetric and positive definite matrices, and $\hat{\theta}_i$ is  the estimate of $\theta_i$.

Under the control law \eqref{equ}, the closed-loop system composed of \eqref{eq34y} with $d_i =0$ and \eqref{equ} is as follows: $\forall i,j\in \mathcal{V}$,
\begin{subequations} \label{equxy}
\begin{align}
\dot{\gamma}_i &=- \delta {k}_i \nabla_if_i(\bm{z_i})+g_i (\xi_i, t) \tilde{\theta}_i+ \kappa_i (\hat{\gamma}_i-\gamma_i)  \label{equxy1} \\
\dot{x}_i^s&=A_i^K x_i^s+B_i^s  (- \delta {k}_i \nabla_if_i(\bm{z_i}) +g_i \tilde{\theta}_i
+ \kappa_i (\hat{\gamma}_i-\gamma_i) ) \label{equxy2} \\
\dot{\hat{\gamma}}_i &= - \delta {k}_i \nabla_if_i(\bm{z_i}) \label{equxy3} \\
\dot{z}_{ij} &=-\left(\sum_{k=1}^N a_{ik}(t)(z_{ij}-z_{kj})+a_{ij}(t)(z_{ij}-\hat{\gamma}_j) \right)  \label{equxy5} \\
\dot{\tilde{\theta}}_i&=  \Lambda_i g_i^T (\xi_i,t) (\hat{\gamma}_i-\gamma_i) \label{equxy6}
\end{align}
\end{subequations}
where $\tilde{\theta}_i=\hat{\theta}_i- \theta_i$.

Let $\bm{\hat{\gamma}}= \col (\hat{\gamma}_1, \hat{\gamma}_2, \cdots, \hat{\gamma}_N)  $, $\bm{\xi} =\mbox{col} (\xi_1, \cdots, {\xi}_N)$, $\bm{g} (\bm{\xi},t)=\mbox{diag}({g_1}({\xi}_1,t),\cdots,{g_N} ({\xi}_N,t)) $,
$\bm{z}=\mbox{col}(\bm{z_1},\cdots,\bm{z_N}) $, $\bm{\Lambda}=\mbox{diag}(\Lambda_1, \cdots, \Lambda_N)$,   $\bm{\hat{\theta}}=\mbox{col}(\hat{\theta}_1, \cdots,\hat{\theta}_N)$, $\bm{\tilde{\theta}}=\mbox{col}(\tilde{\theta}_1, \cdots,\tilde{\theta}_N)$, and $\kappa = \mbox{diag}(\kappa_1 I_{n_1}, \cdots, \kappa_N I_{n_N})$.
Then, the concatenated form of the closed-loop system (\ref{equxy})  is as follows:
\begin{subequations} \label{equc}
\begin{align}
\dot{\bm{\gamma}}&=-\delta \bm{k} H(\bm{z}) + \bm{g} (\bm{\xi},t)\bm{\tilde{\theta}}+ \kappa(\bm{\hat{\gamma}}-\bm{\gamma}) \label{equc1}  \\
\dot{\bm{x}}^s&=A^K \bm{x}^s+B(-\delta \bm{k}H(\bm{z}) +\bm{g} (\bm{\xi}, t)\bm{\tilde{\theta}}+ \kappa(\bm{\hat{\gamma}}-\bm{\gamma}))    \label{equc2}   \\
\dot{\bm{\hat{\gamma}}}&=-\delta \bm{k} H(\bm{z}) \label{equc3}  \\
    \dot{\bm{z}}&=-({\mathcal L}_{\sigma(t)} \otimes I_{n}+B_{\sigma(t)}) \bm{z}+B_{\sigma(t)} (\bm{1}_N \otimes \bm{\hat{\gamma}})  \label{equc5}   \\
  \dot{\bm{\tilde{\theta}}}&= \bm{\Lambda} \bm{g}^T (\bm{\xi}, t) (\bm{\hat{\gamma}}-\bm{\gamma}). \label{equc6}
     \end{align}
\end{subequations}

We need one more assumption as follows.

 \begin{assmp}\label{ass6}
$\forall~ i \in \mathcal{V}$, $||g_i (\xi_i, t)|| \leq \phi_i (\xi_i) $ for some globally defined functions $\phi_i (\xi_i)$.
\end{assmp}

\begin{rem}
Assumption \ref{ass6} is satisfied automatically if $g_i (\xi_i, t)$ are all independent of $t$.
\end{rem}

We now state our main result as follows.

\begin{thm} \label{thm3.2}
Under Assumptions \ref{ass3.1} to  \ref{ass6},
there exist $\delta^*>0$ such that,  for any  $0<\delta<\delta^*$, and any initial condition,   the solution of the closed-loop system \eqref{equc}  is bounded, and
\begin{subequations} \label{eq38v}
\begin{align}
\lim_{t \rightarrow \infty} \bm{\gamma} (t) &= \bm{x}^*  \label{eq38va} \\
\lim_{t \rightarrow \infty} \bm{x}^s (t) &= \bm{0}_{\sum_{i=1}^N n_i (r_i-1)}  \label{eq38vb}  \\
\lim_{t \rightarrow \infty} \hat{\bm{\gamma}} (t) &= \bm{x}^*  \label{eq38vc} \\
\lim_{t \rightarrow \infty} {\bm{z}} (t) &= {\bf 1} \otimes \bm{x}^*.   \label{eq38ve}
\end{align}
\end{subequations}
As a result, $\lim_{t \rightarrow \infty} \bm{x} (t) = \bm{x}^*$.
\end{thm}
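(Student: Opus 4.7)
\emph{Sketch.} The plan exploits the triangular structure of \eqref{equc}. The subsystem \eqref{equc3}--\eqref{equc5} in $(\hat{\bm\gamma},\bm z)$ is decoupled from $(\bm\gamma,\bm x^s,\tilde{\bm\theta})$ and is structurally identical to \eqref{eq38xx} in Theorem~\ref{th3.2} (with $\hat{\bm\gamma}$ replacing $\bm x$ and $\bm z$ replacing $\bm y$). Theorem~\ref{th3.2} therefore supplies the required $\delta^*>0$: for every $0<\delta<\delta^*$, $(\hat{\bm\gamma}(t),\bm z(t))\to(\bm x^*,\bm 1_N\otimes\bm x^*)$ exponentially, delivering \eqref{eq38vc} and \eqref{eq38ve}.

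Now introduce the tracking error $e_i=\gamma_i-\hat\gamma_i$. Subtracting \eqref{equc3} from \eqref{equc1} and combining with \eqref{equc6} yields
\EQQ
\dot e_i=-\kappa_i e_i+g_i(\xi_i,t)\tilde\theta_i,\qquad \dot{\tilde\theta}_i=-\Lambda_i g_i^T(\xi_i,t)\,e_i.
\ENN
For the standard adaptive Lyapunov candidate $W=\sum_{i=1}^N\bigl(\tfrac12\|e_i\|^2+\tfrac12\tilde\theta_i^T\Lambda_i^{-1}\tilde\theta_i\bigr)$ the cross-terms $e_i^T g_i\tilde\theta_i$ cancel, giving $\dot W=-\sum_i\kappa_i\|e_i\|^2\le 0$. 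Thus $\bm e,\tilde{\bm\theta}\in L^\infty$ and $\bm e\in L^2$; combined with $\hat{\bm\gamma}\in L^\infty$ from the first step, $\bm\gamma=\hat{\bm\gamma}+\bm e\in L^\infty$.

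Combining \eqref{equxy1}--\eqref{equxy2} gives $\dot{\bm x}^s=A^K\bm x^s+B\dot{\bm\gamma}$ with $A^K$ Hurwitz. Integration by parts yields the key identity
\EQQ
\bm x^s(t)=e^{A^Kt}\bigl(\bm x^s(0)-B\bm\gamma(0)\bigr)+B\bm\gamma(t)+A^K\!\int_0^t\! e^{A^K(t-\tau)}B\bm\gamma(\tau)\,d\tau,
\ENN
so Hurwitzness of $A^K$ plus boundedness of $\bm\gamma$ immediately implies $\bm x^s\in L^\infty$. Since $\bm\xi$ is a linear function of $(\bm\gamma,\bm x^s)$ via \eqref{eq36}, $\bm\xi\in L^\infty$ and Assumption~\ref{ass6} gives $\bm g(\bm\xi,t)\in L^\infty$; consequently $\dot{\bm e}$ is bounded, $\bm e$ is uniformly continuous, and Barbalat's lemma together with $\bm e\in L^2$ yields $\bm e(t)\to 0$, i.e.\ \eqref{eq38va}. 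Letting $t\to\infty$ in the IBP identity above, with $\bm\gamma(\tau)\to\bm x^*$ and the standard asymptotic calculation $A^K\!\int_0^t e^{A^K(t-\tau)}B\bm\gamma(\tau)\,d\tau\to -B\bm x^*$, produces $\bm x^s(t)\to 0$, i.e.\ \eqref{eq38vb}; then $\bm x(t)\to\bm x^*$ follows from \eqref{eq36}. The main obstacle is that the adaptive law only enforces $\bm e\to 0$, not $\tilde{\bm\theta}\to 0$, so convergence of $\bm x^s$ cannot be extracted from any joint Lyapunov function and must instead come from the integration-by-parts cascade, whose validity hinges on first securing $\bm\gamma\in L^\infty$ via the adaptive Lyapunov analysis in the preceding step.
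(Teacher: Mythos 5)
Your proposal is correct and follows essentially the same route as the paper: Theorem~\ref{th3.2} handles the decoupled $(\hat{\bm{\gamma}},\bm{z})$ subsystem, the standard adaptive Lyapunov function on the $(\bm{\tilde{\gamma}},\bm{\tilde{\theta}})$ error dynamics gives boundedness and, via a Barbalat-type argument after securing boundedness of $\bm{\xi}$ and hence of $\bm{g}(\bm{\xi},t)$ through Assumption~\ref{ass6}, convergence $\bm{\tilde{\gamma}}\to \bm{0}$. The only cosmetic difference is the last cascade step: you establish boundedness and convergence of $\bm{x}^s$ through an explicit integration-by-parts convolution identity for $\dot{\bm{x}}^s=A^K\bm{x}^s+B\dot{\bm{\gamma}}$, whereas the paper reads \eqref{eq36} as a Hurwitz-stable linear filter driven by $\bm{\gamma}_i(t)$; the two arguments are equivalent.
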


\begin{proof}
Let $\bm{\tilde{\gamma}}=\bm{\hat{\gamma}}-\bm{\gamma}$. Then,
\EQ  \label{32_1}
\begin{aligned}
\dot{\bm{\tilde{\gamma}}}&=\dot{\bm{\hat{\gamma}}}-\dot{\bm{\gamma}} = -\bm{g}  (\bm{\xi},  t) \bm{\tilde{\theta}}- \kappa \bm{\tilde{\gamma}}.
\end{aligned}
\EN

Thus, the closed-loop system \eqref{equc} is converted to the following form:
\begin{subequations} \label{eq38}
\begin{align}
\dot{\bm{\tilde{\gamma}}}&=-\bm{g} (\bm{\xi},  t) \bm{\tilde{\theta}}-  \kappa \bm{\tilde{\gamma}} \label{eq38a}  \\
\dot{\bm{x}}^s&=A^K \bm{x}^s+B(-\delta \bm{k}H(\bm{z}) +\bm{g} (\bm{\xi}, t)\bm{\tilde{\theta}}+ \kappa \bm{\tilde{\gamma}} )  \label{eq38b}  \\
\dot{\bm{\hat{\gamma}}}&=-\delta \bm{k} H(\bm{z}) \label{eq38c}  \\
    \dot{\bm{z}}&=-({\mathcal L}_{\sigma(t)} \otimes I_{n}+B_{\sigma(t)}) \bm{z}+B_{\sigma(t)} (\bm{1}_N \otimes \bm{\hat{\gamma}})  \label{eq38e} \\
    \dot{\bm{\tilde{\theta}}}&=  \bm{\Lambda} \bm{g}^T (\bm{\xi},  t)  \bm{\tilde{\gamma}}.
   \end{align}
\end{subequations}
By Theorem \ref{th3.2}, there exist $\delta^*>0$ such that,  for any $0<\delta<\delta^*$, $(\bm{x}^*,   {\bf 1}_N \otimes \bm{x}^*)$ is the globally exponentially stable equilibrium of  subsystems (\ref{eq38c}) and (\ref{eq38e}).
Thus, (\ref{eq38vc}) and (\ref{eq38ve}) hold.
We only need to show that $\bm{\tilde{\theta}}$ is bounded and (\ref{eq38va}) and \eqref{eq38vb} hold.
For this purpose, consider the following system:
\begin{subequations} \label{eq39}
\begin{align}
\dot{\bm{\tilde{\gamma}}}&=  - \bm{g} (\bm{\xi},  t) \bm{\tilde{\theta}}-\kappa \bm{\tilde{\gamma}} \label{eq39a}  \\
  \dot{\bm{\tilde{\theta}}}&= \bm{\Lambda} \bm{g}^T (\bm{\xi},  t)  \bm{\tilde{\gamma}}. \label{equ39c}
     \end{align}
\end{subequations}

Let $\bar{\bm{z}}= \col (\bm{\tilde{\gamma}} , \bm{\tilde{\theta}})$. Choose the Lyapunov function candidate for \eqref{eq39} as follows:
\EQ \label{lya}
\begin{aligned}
 V_2 (\bar{\bm{z}})=& \frac{1}{2} \bm{\tilde{\gamma}}^T  \bm{\tilde{\gamma}} + \frac{1}{2}  \bm{\tilde{\theta}}^T \bm{\Lambda}^{-1} \bm{\tilde{\theta}}.
 \end{aligned}
 \EN

\par  Then, the derivative of \eqref{lya} along the solution of \eqref{eq39} satisfies
\EQ \label{314}
\begin{aligned}
 \dot{V}_2 (\bar{\bm{z}}) &= - \bm{\tilde{\gamma}}^T (\bm{g} (\bm{\xi}, t) \bm{\tilde{\theta}}+\kappa \bm{\tilde{\gamma}}) +
\bm{\tilde{\theta}}^T \bm{g}^T (\bm{\xi}, t) \bm{\tilde{\gamma}} \notag \\
 & = - \bm{\tilde{\gamma}}^T \kappa \bm{\tilde{\gamma}}.
\end{aligned}
\EN
Thus,  $\bm{\tilde{\gamma}} $, and $\bm{\tilde{\theta}}$ are both bounded.
Since \eqref{eq38vc} implies  $\bm{\hat{\gamma}} (t) $ is bounded,  $\bm{{\gamma}} (t) $   is also bounded.
Since equation (\ref{eq36}) can be viewed as a stable linear differential equation in $x_i$ with an bounded input $\bm{\gamma}_i (t)$,
$\xi_i$ are all bounded. Thus, $\bm{\xi}$ is bounded.

Since  $\ddot{V}_2 (\bar{\bm{z}})  =  - 2 \bm{\tilde{\gamma}}^T \kappa \dot{\bm{\tilde{\gamma}}}
= 2 \bm{\tilde{\gamma}}^T \kappa (\bm{g} (\bm{\xi},  t) \bm{\tilde{\theta}}+\kappa \bm{\tilde{\gamma}})$, and $ \bm{g} (\bm{\xi},  t)$ is bounded over  $[0, \infty)$ by Assumption \ref{ass6}, $\ddot{V}_2 (\bar{\bm{z}})$ is bounded. Thus,  $\dot{V}_2 (\bar{\bm{z}})$ is uniformly continuous.
 By Babalat's Lemma \cite{Khalil},  $\lim_{t \rightarrow \infty} V_2 (\bar{\bm{z}} (t)) = 0$,  which implies
  $\lim_{t \rightarrow \infty} \bm{\tilde{\gamma}}  (t) =  \bm{0}_{n} $. This fact together with the fact that $\lim_{t \rightarrow \infty} \bm{\hat{\gamma}} (t) =  \bm{x}^{*} $   implies that
$\lim_{t \rightarrow \infty} \bm{\gamma} (t) =  \bm{x}^{*}$.

Finally, consider equation (\ref{eq36}) again, which can be viewed as a stable linear differential equation in $x_i$ with the input $\bm{\gamma}_i (t)$ satisfying  $\lim_{t \rightarrow \infty} \bm{\gamma}_i (t) =  \bm{x}^{*}_i$. Thus, we have
$\lim_{t \rightarrow \infty} \bm{x}^s (t) =  \bm{0}_{\sum_{i=1}^N n_i (r_i-1)}$.
Thus,  $\lim_{t \rightarrow \infty} \bm{\gamma}_i (t) = c^i_0 \lim_{t \rightarrow \infty} \bm{x}_i (t) = \bm{x}^{*}_i$, that is,
$\lim_{t \rightarrow \infty} \bm{x} (t) =  \bm{x}^{*}$.
\end{proof}

\begin{rem} \label{rem6}
 The problem is quite different from the adaptive stabilization of the same system which can be controlled by the following control law:
 \begin{subequations}\label{equds}
\begin{align}
{u}_i &= g_i (\xi_i, t) \hat{\theta}_i  - K^s_i x_i^s - \kappa_i \gamma_i  \label{equds1} \\
\dot{\hat{\theta}}_i&= - \Lambda_i g_i^T (\xi_i, t) \gamma_i.\label{equds5}
\end{align}
 \end{subequations}
Comparing \eqref{equds} with  \eqref{equ}, one can see that in order to deal with the unknown Nash equilibrium and the communication constraints, we need to introduce two more equations \eqref{equ2} and \eqref{equ4} to estimate the Nash equilibrium and to overcome the communication constraints. Also, we need to modify
\eqref{equds1}  into the form \eqref{equ1} to induce a closed-loop system of the form \eqref{equc} so that Theorem \ref{th3.2} can be applied to \eqref{equc3} and \eqref{equc5}.
\end{rem}

\subsection{Imperfect Information Case with Disturbances}

To deal with
the disturbances,  let $\hat{D}_i$ be the estimates of the upper bounds  $D_i$ of $d_i$.
Then,  our control law is as follows: for all $i,j\in \mathcal{V}$,
\begin{subequations} \label{equd}
\begin{align}
{u}_i &=- \delta {k}_i \nabla_if_i(\bm{z_i}) + g_i (\xi_i, t) \hat{\theta}_i  - K^s_i x_i^s  \notag \\
&+ \kappa_i (\hat{\gamma}_i-\gamma_i)  + sgn(\hat{\gamma}_i- \gamma_i) \hat{D}_i  \label{equd1} \\
\dot{\hat{\gamma}}_i &= - \delta {k}_i \nabla_if_i(\bm{z_i}) \label{equd2} \\
\dot{z}_{ij} &=-\left(\sum_{k=1}^N a_{ik}(t)(z_{ij}-z_{kj})+a_{ij}(t)(z_{ij}-\hat{\gamma}_j) \right) \label{equd4} \\
\dot{\hat{\theta}}_i&=   \Lambda_i g_i^T (\xi_i,\eta_i) ( \hat{\gamma}_i- \gamma_i )\label{equd5} \\
\dot{\hat{D}}_{i}&= ( \hat{\gamma}_i - \gamma_{i})^T sgn (\hat{\gamma}_i- \gamma_i)  \label{equd6}
\end{align}
\end{subequations}
where, for any scalar $x$,  the function $sgn (\cdot)$ is defined as follows:
\begin{align}
sgn(x)&=\begin{cases}
1,\quad &x> 0 \\
0,\quad &x= 0 \\
-1,\quad &x<0
\end{cases} \label{sign_fun0}
\end{align}
and, for any vector $x = \col (x_1, \cdots, x_N) \in \mathbb{R}^N$, $sgn (x) = \col (sgn (x_1), \cdots, sgn (x_N))$.

Under the control law \eqref{equd}, the closed-loop system composed of \eqref{eq34y} and \eqref{equd} is as follows: $\forall i,j\in \mathcal{V}$,
\begin{subequations}\label{eq58d}
\begin{align}
\dot{\gamma}_i &=- \delta {k}_i \nabla_if_i(\bm{z_i})+g_i (\xi_i,t)  \tilde{\theta}_i + \kappa_i (\hat{\gamma}_i-\gamma_i) \notag \\
&+sgn( \hat{\gamma}_i - \gamma_i) \hat{D}_i  +d_i   \label{eq58d1} \\
\dot{x}_i^s&=A_i^K x_i^s+B_i^s  (- \delta {k}_i \nabla_if_i(\bm{z_i}) +g_i (\xi_i, t) \tilde{\theta}_i \notag \\
&+\kappa_i (\hat{\gamma}_i-\gamma_i)) +sgn( \hat{\gamma}_i - \gamma_i) \hat{D}_i  +d_i )     \label{equ58d2} \\
\dot{\hat{\gamma}}_i &= - \delta {k}_i \nabla_if_i(\bm{z_i}) \label{equ58d3} \\
\dot{z}_{ij} &=-\left(\sum_{k=1}^N a_{ik}(t)(z_{ij}-z_{kj})+a_{ij}(t)(z_{ij}-\hat{\gamma}_j) \right)  \label{equ58d5} \\
\dot{\tilde{\theta}}_i&=  \Lambda_i g_i^T (\xi_i,t) (\hat{\gamma}_i - \gamma_i )\label{equ58d6} \\
\dot{\tilde{D}}_{i}&=( \hat{\gamma}_i - \gamma_{i})^T sgn (\hat{\gamma}_i- \gamma_i)   \label{eq58d7}
\end{align}
\end{subequations}
where  $\tilde{D}_i=\hat{D}_i - D_i$.

Let
$\bm{d} =\mbox{col} (d_1, \cdots, d_N)$,
 $\hat{D}=\mbox{col}(\hat{D}_1, \cdots, \hat{D}_N)$,   $D=\mbox{col}(D_1, \cdots, D_N)$,  $\tilde{D}=\mbox{col}(\tilde{D}_1, \cdots, \tilde{D}_N)$,
  and
$ {Sgn} (\bm{\gamma}) = \mbox{diag} (sgn(\gamma_1), \cdots, sgn(\gamma_N))$. Then  the compact form of  the closed-loop system (\ref{eq58d})  is as follows:
\begin{subequations} \label{equcd}
\begin{align}
\dot{\bm{\gamma}}&=-\delta \bm{k} H(\bm{z}) + \bm{g} (\bm{\xi},t)\bm{\tilde{\theta}}+ \kappa(\bm{\hat{\gamma}}-\bm{\gamma})\notag \\
 & + {Sgn} (\bm{\bm{\hat{\gamma}}- \gamma})\hat{ D}  + \bm{d}  \label{equcd1}  \\
\dot{\bm{x}}^s&=A^K \bm{x}^s+B(-\delta \bm{k}H(\bm{z}) +\bm{g} (\bm{\xi}, t)\bm{\tilde{\theta}} \notag \\
&+ \kappa(\bm{\hat{\gamma}}-\bm{\gamma}))   + {Sgn} (\bm{\bm{\hat{\gamma}}- \gamma})\hat{ D}  + \bm{d}) \label{equcd2}  \\
\dot{\bm{\hat{\gamma}}}&=-\delta \bm{k} H(\bm{z}) \label{equcd3}  \\
    \dot{\bm{z}}&=-({\mathcal L}_{\sigma(t)} \otimes I_{n}+B_{\sigma(t)}) \bm{z}+B_{\sigma(t)} (\bm{1}_N \otimes \bm{\hat{\gamma}})  \label{equcd5}   \\
  \dot{\bm{\tilde{\theta}}}&=  \bm{\Lambda} \bm{g}^T (\bm{\xi}, t) ( \bm{\hat{\gamma}} - \bm{\gamma})\label{equcd6} \\
 \dot{\tilde{D}}&=  (Sgn (\bm{\hat{\gamma}}- \bm{\gamma} ))^T (\bm{\hat{\gamma}}- \bm{\gamma}).     \label{equcd7}
  \end{align}
\end{subequations}

It is noted that the right hand side of the closed-loop system \eqref{equcd} is discontinuous in $(\bm{\hat{\gamma}}- \bm{\gamma})$. Thus, the solution of the closed-loop system \eqref{equcd}  must  be defined in the Filipov sense \cite{cortes,shevitz1994lyapunov}.  Put the  closed-loop system \eqref{equcd} in the following compact form:
\begin{align} \label{eqc}
\dot{x}_{c}  = f_c (x_c, t).
\end{align}
Then,  the Filipov solution of \eqref{eqc} satisfies, for almost all $t \geq 0$,
\begin{align} \label{eqF}
 \dot{x}_{c}  \in K [f_c]  (x_c, t)
\end{align}
where $K [f_c] (x_c, t) $ is the Filipov set  of $f_c (x_c, t)$ \cite{cortes,shevitz1994lyapunov}. It is known that, for any scalar $x$,
the Filipov set  of $sgn (x)$  denoted by $K[sgn](x)$ is as follows \cite{shevitz1994lyapunov}:
\begin{align}
K[sgn](x)&=\begin{cases}
1,\quad &x> 0 \\
[-1,1],\quad &x= 0 \\
-1,\quad &x<0.
\end{cases} \label{sign_fun}
\end{align}
Thus, $x K[sgn](x)= |x|$.
Also, for any vector $x = \col (x_1, \cdots, x_N) \in \mathbb{R}^N$, let $K [sgn] (x) = \col ~(K [sgn] (x_1), \cdots, K [sgn] (x_N))$.
Then
\begin{align} \label{id1}
x^T K[sgn](x)&= \sum_{k=1}^N |x_i|.
\end{align}
For $\bm{\gamma} = \col (\gamma_1, \cdots, \gamma_N)$ with $\gamma_i \in  \mathbb{R}^{n_i}$, $i = 1, \cdots, N$, let
$$K [Sgn] (\bm{\gamma}) = \mbox{diag~} (K [sgn] (\gamma_1), \cdots, K [sgn] (\gamma_N)).$$

We are now ready to state our main result as follows.

\begin{thm} \label{thm3.3}
Under Assumptions \ref{ass3.1} to  \ref{ass6},
there exist $\delta^*>0$ such that,  for any $0<\delta<\delta^*$, and any initial condition,   the solution of the closed-loop system \eqref{equcd}  is bounded, and
\begin{subequations} \label{eq38dv}
\begin{align}
\lim_{t \rightarrow \infty} \bm{\gamma} (t) &= \bm{x}^*  \label{eq38dva} \\
\lim_{t \rightarrow \infty} \bm{x}^s (t) &= \bm{0}_{\sum_{i=1}^N n_i (r_i-1)}  \label{eq38dvb}  \\
\lim_{t \rightarrow \infty} \hat{\bm{\gamma}} (t) &= \bm{x}^*  \label{eq38dvc} \\
\lim_{t \rightarrow \infty} {\bm{z}} (t) &= {\bf 1} \otimes \bm{x}^*.   \label{eq38dvd}
\end{align}
\end{subequations}
As a result, $\lim_{t \rightarrow \infty} \bm{x} (t) = \bm{x}^*$.
\end{thm}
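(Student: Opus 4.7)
The plan is to mimic the proof of Theorem \ref{thm3.2} but with a non-smooth Lyapunov analysis that absorbs the unknown bounded disturbance via the sign-based feedback together with its adaptive gain. First I note that subsystems \eqref{equcd3} and \eqref{equcd5} are identical in form to \eqref{equc3} and \eqref{equc5}; hence Theorem \ref{th3.2} directly yields that $(\hat{\bm{\gamma}},\bm{z})$ is globally exponentially driven to $(\bm{x}^*,\mathbf{1}_N\otimes\bm{x}^*)$ for every $0<\delta<\delta^*$. This disposes of \eqref{eq38dvc} and \eqref{eq38dvd}, and in particular shows $\hat{\bm{\gamma}}$ is bounded. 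The remaining work is to establish boundedness of the closed-loop state and the convergence $\bm{\gamma}(t)\to\bm{x}^*$, after which \eqref{eq38dvb} follows from interpreting \eqref{eq36} as a stable LTI system driven by $\bm{\gamma}_i(t)$, exactly as in the proof of Theorem \ref{thm3.2}.

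Next I would set $\tilde{\bm{\gamma}}=\hat{\bm{\gamma}}-\bm{\gamma}$ and compute, using \eqref{equcd1} and \eqref{equcd3},
\begin{equation*}
\dot{\tilde{\bm{\gamma}}}=-\bm{g}(\bm{\xi},t)\tilde{\bm{\theta}}-\kappa\tilde{\bm{\gamma}}-Sgn(\tilde{\bm{\gamma}})\hat{D}-\bm{d}.
\end{equation*}
Together with \eqref{equcd6} and \eqref{equcd7} this gives a closed subsystem in $(\tilde{\bm{\gamma}},\tilde{\bm{\theta}},\tilde{D})$ which depends on the rest of the state only through the bounded signal $\bm{g}(\bm{\xi},t)$ (once we have shown $\bm{\xi}$ is bounded). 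For the Lyapunov analysis I would take
\begin{equation*}
V_3(\tilde{\bm{\gamma}},\tilde{\bm{\theta}},\tilde{D})=\tfrac{1}{2}\tilde{\bm{\gamma}}^{T}\tilde{\bm{\gamma}}+\tfrac{1}{2}\tilde{\bm{\theta}}^{T}\bm{\Lambda}^{-1}\tilde{\bm{\theta}}+\tfrac{1}{2}\tilde{D}^{T}\tilde{D}.
\end{equation*}
Working with the Filippov set-valued derivative, the parameter update \eqref{equcd6} exactly cancels the $-\tilde{\bm{\gamma}}^{T}\bm{g}\tilde{\bm{\theta}}$ term as in Theorem \ref{thm3.2}, while the gain update \eqref{equcd7} combines with the sign term via the identity \eqref{id1} to produce $-\sum_{i}\|\tilde{\bm{\gamma}}_i\|_1 D_i$, which dominates $-\tilde{\bm{\gamma}}^{T}\bm{d}$ because $\|d_i\|\le D_i$ and $\|\cdot\|_2\le\|\cdot\|_1$. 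The upshot is
\begin{equation*}
\dot{V}_3\le-\tilde{\bm{\gamma}}^{T}\kappa\tilde{\bm{\gamma}}\le 0
\end{equation*}
almost everywhere, so $V_3$ is non-increasing and hence $\tilde{\bm{\gamma}}$, $\tilde{\bm{\theta}}$ and $\tilde{D}$ are all bounded.

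With boundedness in hand I would bootstrap in the same order as Theorem \ref{thm3.2}: since $\hat{\bm{\gamma}}$ is bounded and $\tilde{\bm{\gamma}}=\hat{\bm{\gamma}}-\bm{\gamma}$ is bounded, $\bm{\gamma}$ is bounded; viewing \eqref{eq36} as a Hurwitz linear system driven by the bounded $\bm{\gamma}_i$ gives $\bm{\xi}$ bounded, and then Assumption \ref{ass6} yields $\bm{g}(\bm{\xi},t)$ bounded. Consequently $\dot{\tilde{\bm{\gamma}}}$ is essentially bounded, so the (a.e.) derivative $\dot{V}_3$ is uniformly continuous, and a non-smooth Barb\u{a}lat argument (as in \cite{shevitz1994lyapunov}) yields $\dot{V}_3(t)\to 0$, hence $\tilde{\bm{\gamma}}(t)\to\bm{0}$. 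Combined with $\hat{\bm{\gamma}}(t)\to\bm{x}^{*}$ this gives $\bm{\gamma}(t)\to\bm{x}^{*}$, proving \eqref{eq38dva}; the exponentially stable LTI system \eqref{eq36} driven by $\bm{\gamma}_i\to\bm{x}^{*}_i$ then delivers $\bm{x}^{s}(t)\to\bm{0}$ and $\bm{x}(t)\to\bm{x}^{*}$.

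The main obstacle I anticipate is the rigorous handling of the discontinuous right-hand side: one must verify that the cancellation of the sign terms occurs for every selection of the Filippov set $K[Sgn](\tilde{\bm{\gamma}})$, and that the non-smooth Barb\u{a}lat step is justified even though $\dot{V}_3$ is only defined a.e. Everything else is an adaptation of the smooth argument used for Theorem \ref{thm3.2}, with the new gain-adaptation law for $\hat{D}$ playing exactly the role that $\hat{\theta}$ played for the parametric uncertainty.
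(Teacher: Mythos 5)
Your overall strategy coincides with the paper's: decouple \eqref{equcd3}--\eqref{equcd5} and invoke Theorem \ref{th3.2}; pass to $\bm{\tilde{\gamma}}=\bm{\hat{\gamma}}-\bm{\gamma}$; use the Lyapunov function $\frac{1}{2}\bm{\tilde{\gamma}}^T\bm{\tilde{\gamma}}+\frac{1}{2}\bm{\tilde{\theta}}^T\bm{\Lambda}^{-1}\bm{\tilde{\theta}}+\frac{1}{2}\tilde{D}^T\tilde{D}$ with the Filippov set-valued derivative so that the $\hat{D}$-adaptation turns $-\bm{\tilde{\gamma}}^TK[Sgn](\bm{\tilde{\gamma}})\hat{D}$ into $-\sum_i D_i\|\tilde{\gamma}_i\|_1$, which dominates $-\bm{\tilde{\gamma}}^T\bm{d}$; then bootstrap boundedness and finish with a Barb\u{a}lat-type argument and the stable filter \eqref{eq36}. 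All of that matches the paper.

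There is, however, one step that fails as written: the claim that ``$\dot{\tilde{\bm{\gamma}}}$ is essentially bounded, so the (a.e.) derivative $\dot{V}_3$ is uniformly continuous.'' The a.e. derivative of $V_3$ is not $-\bm{\tilde{\gamma}}^T\kappa\bm{\tilde{\gamma}}$; it is $-\bm{\tilde{\gamma}}^T\kappa\bm{\tilde{\gamma}}-\sum_i\sum_j({D}_i|\tilde{\gamma}_{ij}|+d_{ij}\tilde{\gamma}_{ij})$, and the term $d_{ij}(t)\tilde{\gamma}_{ij}(t)$ inherits the jumps of the merely piecewise continuous disturbance $\bm{d}$ at infinitely many instants, so $\dot{V}_3$ is not uniformly continuous and Barb\u{a}lat cannot be applied to $V_3$ directly (the paper explicitly flags this obstruction). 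The repair, which is what the paper does, is to apply the limit argument to $W(t)=\int_0^t\bm{\tilde{\gamma}}^T(\tau)\kappa\bm{\tilde{\gamma}}(\tau)\,d\tau$ instead: $W$ is nondecreasing and bounded above by $V(0)-V(t)\le V(0)$, hence convergent, $\dot{W}=\bm{\tilde{\gamma}}^T\kappa\bm{\tilde{\gamma}}$ is the quantity you actually need to send to zero, and $\ddot{W}$ is essentially bounded with discontinuities separated by a dwell time, so a generalized Barb\u{a}lat lemma (Corollary 2.5 of \cite{cshbook}) gives $\dot{W}(t)\to 0$ and hence $\bm{\tilde{\gamma}}(t)\to\bm{0}$. (Equivalently, one can note that $\dot{W}$ itself is Lipschitz in $t$ because $\bm{\tilde{\gamma}}$ is bounded with essentially bounded derivative.) Once this substitution is made, the rest of your argument goes through exactly as in the paper.
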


\begin{proof}
By Theorem \ref{th3.2},  there exist $\delta^*>0$ such that,  for any $0<\delta<\delta^*$, $(\bm{x}^*,   {\bf 1}_N \otimes \bm{x}^*)$ is the globally exponentially stable equilibrium of the subsystems (\ref{equcd3}) and (\ref{equcd5}).
Thus, (\ref{eq38dvc}) and (\ref{eq38dvd}) hold.
We only need to show that $\bm{\tilde{\theta}}$  and $\bm{\tilde{D}}$ are bounded and (\ref{eq38dva}) and  \eqref{eq38dvb} hold.
For this purpose, performing the coordinate transformation $\bm{\tilde{\gamma}}=  \bm{\hat{\gamma}} - \bm{\gamma}$ on (\ref{equcd1}), (\ref{equcd6}) and (\ref{equcd7}) gives  the following system:
\begin{subequations} \label{eqd39}
\begin{align}
\dot{\bm{\tilde{\gamma}}}&=  - \bm{g} (\bm{\xi},  t) \bm{\tilde{\theta}}-\kappa \bm{\tilde{\gamma}} - {Sgn} (\bm{\tilde{\gamma}})\hat{ D}   - \bm{d}   \label{eqd39a}  \\
\dot{\bm{\tilde{\theta}}}&=   \bm{\Lambda} \bm{g}^T(\bm{\xi},  t)  \bm{\tilde{\gamma}} \label{equd39c} \\
 \dot{\tilde{D}}&= (Sgn (\bm{\tilde{\gamma}}))^T \bm{\tilde{\gamma}}.     \label{eqd39e}
     \end{align}
\end{subequations}

Let  $\hat{\bm{z}}= \col (\bm{\tilde{\gamma}} , \bm{\tilde{\theta}}, {\tilde{D}}  )$ and denote the compact form of \eqref{eqd39} by $\dot{\hat{\bm{z}}} = \hat{f} (\hat{\bm{z}}, t)$.
Choose the Lyapunov function candidate for \eqref{eqd39} as follows:
\EQ \label{lyad}
\begin{aligned}
V (\hat{\bm{z}})=& \frac{1}{2} \bm{\tilde{\gamma}}^T  \bm{\tilde{\gamma}} +   \frac{1}{2} \bm{\tilde{\theta}}^T \bm{\Lambda}^{-1} \bm{\tilde{\theta}}
 +  \frac{1}{2} {\tilde{D}}^T {\tilde{D}}
 \end{aligned}
 \EN
whose gradient is $$\partial V= [\bm{\tilde{\gamma}}^T,   \bm{\tilde{\theta}}^T \bm{\Lambda}^{-1},  {\tilde{D}}^T ].$$
By Theorem 2.2 of \cite{shevitz1994lyapunov}, $\dot{V}$ exists almost everywhere (a.e.), and $\dot{V}\in ^{a.e.}\dot{\tilde{V}}$, where
\EQ\label{eq02}
\begin{aligned}
\dot{\tilde{V}}&=  \begin{array}{cc} \bigcap & \phi K [ \hat{f} ]  (\hat{\bm{z}}, t) \\
                                      \footnotesize{ \phi \in    \partial V}  & ~\end{array}   \\
& = - \bm{\tilde{\gamma}}^T \left (\bm{g} (\bm{\xi}, t) \bm{\tilde{\theta}}+\kappa \bm{\tilde{\gamma}} + K[ {Sgn}] (\bm{\tilde{\gamma}})\hat{ D} +\bm{d} \right )  \\
& +
\bm{\tilde{\theta}}^T \bm{g}^T (\bm{\xi}, t) \bm{\tilde{\gamma}} + {\tilde{D}}^T (K [Sgn] (\bm{\tilde{\gamma}}))^T \bm{\tilde{\gamma}}   \\
 & = - \bm{\tilde{\gamma}}^T \kappa \bm{\tilde{\gamma}}
  - \bm{\tilde{\gamma}}^T  ( K [{Sgn}] (\bm{\tilde{\gamma}})\hat{ D}  +\bm{d} ) \\
  & +{\tilde{D}}^T
(K [Sgn] (\bm{\tilde{\gamma}}))^T \bm{\tilde{\gamma}}   \\
&  = - \bm{\tilde{\gamma}}^T \kappa \bm{\tilde{\gamma}}  - \bm{\tilde{\gamma}}^T   K [{Sgn}] (\bm{\tilde{\gamma}}) { D}  - \bm{\tilde{\gamma}}^T \bm{d} \\
&  =  -  \bm{\tilde{\gamma}}^T \kappa \bm{\tilde{\gamma}} - \sum_{i=1}^N \left ({D}_i (\tilde{\gamma}_i)^T K [sgn] (\tilde{\gamma}_i) + d^T_i\tilde{\gamma}_i \right ).
\end{aligned}
\EN
Using (\ref{id1}) in \eqref{eq02} gives
\begin{align}
\dot{\tilde{V}}& =
  -  \bm{\tilde{\gamma}}^T \kappa \bm{\tilde{\gamma}} -\sum_{i=1}^N \sum_{j=1}^{n_i} ({D}_i |\tilde{\gamma}_{ij}| +  d_{ij}\tilde{\gamma}_{ij} ).
\end{align}
Thus, $\dot{\tilde{V}} = \dot{{V}}$. Noting $\sum_{j=1}^{n_i} ({D}_i |\tilde{\gamma}_{ij}| +  d_{ij}\tilde{\gamma}_{ij} ) \geq 0$ gives
\begin{align}
\dot{{V}}& \leq  - \bm{\tilde{\gamma}}^T \kappa \bm{\tilde{\gamma}}.
\end{align}
Thus, $\bm{\tilde{\gamma}}$ , $\bm{\tilde{\theta}}$, and $\bm{\tilde{D}}$   are all bounded.

Since \eqref{eq38dvc} implies  $\bm{\hat{\gamma}} (t) $ is bounded,  $\bm{{\gamma}} (t) $   is also bounded.
Since equation (\ref{eq36}) can be viewed as a stable linear differential equation in $x_i$ with an bounded input $\bm{\gamma}_i (t)$,
$\xi_i$ are all bounded. Thus, $\bm{\xi}$ is bounded.

Let \begin{align*}
	W(t)=\int_{0}^{t}   \bm{\tilde{\gamma}}^T (\tau) \kappa \bm{\tilde{\gamma}}  (\tau) d\tau.
\end{align*}
Then, for $t \geq 0$, $\dot{W} (t) = \bm{\tilde{\gamma}}^T (t) \kappa \bm{\tilde{\gamma}}  (t) \geq 0$ and
\begin{align*}
	W(t) \leq -  \int_{0}^{t} \dot{{V}}(\tau)d\tau=-{{V}}(t)+{{V}}(0).
\end{align*}
Since ${{V}}(t)$ is lower bounded, $\lim_{t \to \infty}W(t)$ has a finite limit.
 Moreover, we have
 \begin{equation}\label{ddotV22}
\ddot{W} (t)
=  - 2 \bm{\tilde{\gamma}}^T \kappa \left (\bm{g} (\bm{\xi},  t) \bm{\tilde{\theta}}+\kappa \bm{\tilde{\gamma}}  + {Sgn} (\bm{\gamma})\hat{ D} +\bm{d} \right).
\end{equation}
Since  we have shown that $\bm{\tilde{\gamma}}$ , $\bm{\tilde{\theta}}$, $\bm{\tilde{D}}$,  and $\bm{\xi}$   are all bounded,
 $\bm{g} (\bm{\xi},  t)$ is also bounded by Assumption \ref{ass6}.
Also, $\bm{d} (t)$ is bounded over  $[0, \infty)$ by Assumption \ref{ass6}.
Thus, if   $\bm{d} (t)$  is continuous over   $[0, \infty)$, 
$\ddot{W} $ is bounded and continuous over   $[0, \infty)$, which implies  $\dot{W} $ is uniformly continuous.
 By Babalat's Lemma \cite{Khalil},  $\lim_{t \rightarrow \infty} \dot{W}  (t) = 0$,  which implies
  $\lim_{t \rightarrow \infty} \bm{\tilde{\gamma}}  (t) =  \bm{0}_{n} $. 

Nevertheless, since we allow  $\bm{d} (t)$ to be piecewise continuous, $\ddot{W} $ may  be discontinuous at infinitely many time instants. Thus, we cannot  invoke  Barbalat's Lemma. 
 Nevertheless, denote the discontinuous time instants of $\bm{d}$ by $\tau_0 = 0, \tau_1, \cdots$. Then, for some $\tau >0$,   $\tau_{j+1}-\tau_j\geq \tau >0$ since $\bm{d}$ is piecewise continuous.
 Thus, we have shown that $W (t)$ satisfies the following three conditions:
\begin{itemize}
	\item [1)]
	$\lim_{t \to \infty} W (t)$ exists;
	\item [2)]
	$W (t)$ is twice differentiable on each time interval $[\tau_j,\tau_{j+1})$;  
	\item [3)]
	$\ddot{W}_i(t)$ is bounded over $[0, \infty)$ in the sense that $\|\ddot{W}\|_\infty$ is finite. 
\end{itemize}
Thus, by generalized Barbalat's Lemma as can be found in Corollary 2.5 of \cite{cshbook} to conclude $\lim_{t \to \infty} \dot{W} (t)=0$, and hence $\lim_{t\rightarrow \infty} \bm{\tilde{\gamma}}  (t) = 0$.
This fact together with the fact that $\lim_{t \rightarrow \infty} \bm{\hat{\gamma}} (t) =  \bm{x}^{*} $   implies that
$\lim_{t \rightarrow \infty} \bm{\gamma} (t) =  \bm{x}^{*}$.

Finally, consider equation (\ref{eq36}) which can be viewed as a stable linear differential equation in $x_i$ with the input $\bm{\gamma}_i (t)$ satisfying  $\lim_{t \rightarrow \infty} \bm{\gamma}_i (t) =  \bm{x}^{*}_i$. Thus, we have
$\lim_{t \rightarrow \infty} \bm{x}^s (t) =  \bm{0}_{\sum_{i=1}^N n_i (r_i-1)}$, which implies
$\lim_{t \rightarrow \infty} \bm{x} (t) =  \bm{x}^{*}$.

%
\end{proof}

\ifCLASSOPTIONcaptionsoff
  \newpage
\fi

\end{document}